\newtheorem{thm}{Theorem}[section]
\newtheorem{cor}[thm]{Corollary}
\newtheorem{conj}[thm]{Conjecture}
\newtheorem{prop}[thm]{Proposition}
\theoremstyle{remark}
\theoremstyle{definition}
\numberwithin{equation}{section}
\begin{document}

\vfuzz0.5pc
\hfuzz0.5pc % Don't bother to report 
          % overfull boxes if overage is < 6pt

\newcommand{\claimref}[1]{Claim \ref{#1}}
\newcommand{\thmref}[1]{Theorem \ref{#1}}
\newcommand{\propref}[1]{Proposition \ref{#1}}
\newcommand{\lemref}[1]{Lemma \ref{#1}}
\newcommand{\coref}[1]{Corollary \ref{#1}}
\newcommand{\remref}[1]{Remark \ref{#1}}
\newcommand{\conjref}[1]{Conjecture \ref{#1}}
\newcommand{\questionref}[1]{Question \ref{#1}}
\newcommand{\defnref}[1]{Definition \ref{#1}}
\newcommand{\secref}[1]{\S \ref{#1}}
\newcommand{\ssecref}[1]{\ref{#1}}
\newcommand{\sssecref}[1]{\ref{#1}}

\newcommand{\RED}{{\mathrm{red}}}
\newcommand{\tors}{{\mathrm{tors}}}
\newcommand{\eq}{\Leftrightarrow}

\newcommand{\mapright}[1]{\smash{\mathop{\longrightarrow}\limits^{#1}}}
\newcommand{\mapleft}[1]{\smash{\mathop{\longleftarrow}\limits^{#1}}}
\newcommand{\mapdown}[1]{\Big\downarrow\rlap{$\vcenter{\hbox{$\scriptstyle#1$}}$}}
\newcommand{\smapdown}[1]{\downarrow\rlap{$\vcenter{\hbox{$\scriptstyle#1$}}$}}

\newcommand{\A}{{\mathbb A}}
\newcommand{\I}{{\mathcal I}}
\newcommand{\J}{{\mathcal J}}
\newcommand{\CO}{{\mathcal O}}
\newcommand{\C}{{\mathcal C}}
\newcommand{\BC}{{\mathbb C}}
\newcommand{\BQ}{{\mathbb Q}}
\newcommand{\m}{{\mathcal M}}
\newcommand{\h}{{\mathcal H}}
\newcommand{\Z}{{\mathcal Z}}
\newcommand{\BZ}{{\mathbb Z}}
\newcommand{\W}{{\mathcal W}}
\newcommand{\Y}{{\mathcal Y}}
\newcommand{\T}{{\mathcal T}}
\newcommand{\BP}{{\mathbb P}}
\newcommand{\CP}{{\mathcal P}}
\newcommand{\G}{{\mathbb G}}
\newcommand{\BR}{{\mathbb R}}
\newcommand{\D}{{\mathcal D}}
\newcommand{\DD}{{\mathcal D}}
\newcommand{\LL}{{\mathcal L}}
\newcommand{\f}{{\mathcal F}}
\newcommand{\E}{{\mathcal E}}
\newcommand{\BN}{{\mathbb N}}
\newcommand{\N}{{\mathcal N}}
\newcommand{\K}{{\mathcal K}}
\newcommand{\R}{{\mathbb R}}
\newcommand{\PP}{{\mathbb P}}
\newcommand{\Pp}{{\mathbb P}}
\newcommand{\BF}{{\mathbb F}}
\newcommand{\QQ}{{\mathcal Q}}
\newcommand{\closure}[1]{\overline{#1}}
\newcommand{\EQ}{\Leftrightarrow}
\newcommand{\imply}{\Rightarrow}
\newcommand{\isom}{\cong}
\newcommand{\embed}{\hookrightarrow}
\newcommand{\tensor}{\mathop{\otimes}}
\newcommand{\wt}[1]{{\widetilde{#1}}}
\newcommand{\ol}{\overline}
\newcommand{\ul}{\underline}

\newcommand{\bs}{{\backslash}}
\newcommand{\CS}{{\mathcal S}}
\newcommand{\CA}{{\mathcal A}}
\newcommand{\Q}{{\mathbb Q}}
\newcommand{\F}{{\mathcal F}}
\newcommand{\sing}{{\text{sing}}}
\newcommand{\U} {{\mathcal U}}
\newcommand{\B}{{\mathcal B}}
\newcommand{\X}{{\mathcal X}}
\newcommand{\V}{{\mathcal V}}

% Javier-Macro
\newcommand{\ECS}[1]{E_{#1}(X)}
\newcommand{\CV}[2]{{\mathcal C}_{#1,#2}(X)}

\newcommand{\rank}{\mathop{\mathrm{rank}}\nolimits}
\newcommand{\codim}{\mathop{\mathrm{codim}}\nolimits}
\newcommand{\Ord}{\mathop{\mathrm{Ord}}\nolimits}
\newcommand{\Var}{\mathop{\mathrm{Var}}\nolimits}
\newcommand{\Ext}{\mathop{\mathrm{Ext}}\nolimits}
\newcommand{\EXT}{\mathop{{\mathcal E}\mathrm{xt}}\nolimits}
\newcommand{\Pic}{\mathop{\mathrm{Pic}}\nolimits}
\newcommand{\Spec}{\mathop{\mathrm{Spec}}\nolimits}
\newcommand{\Jac}{\mathop{\mathrm{Jac}}\nolimits}
\newcommand{\Div}{\mathop{\mathrm{Div}}\nolimits}
\newcommand{\sgn}{\mathop{\mathrm{sgn}}\nolimits}
\newcommand{\supp}{\mathop{\mathrm{supp}}\nolimits}
\newcommand{\Hom}{\mathop{\mathrm{Hom}}\nolimits}
\newcommand{\Sym}{\mathop{\mathrm{Sym}}\nolimits}
\newcommand{\nilrad}{\mathop{\mathrm{nilrad}}\nolimits}
\newcommand{\Ann}{\mathop{\mathrm{Ann}}\nolimits}
\newcommand{\Proj}{\mathop{\mathrm{Proj}}\nolimits}
\newcommand{\mult}{\mathop{\mathrm{mult}}\nolimits}
\newcommand{\Bs}{\mathop{\mathrm{Bs}}\nolimits}
\newcommand{\Span}{\mathop{\mathrm{Span}}\nolimits}
\newcommand{\IM}{\mathop{\mathrm{Im}}\nolimits}
\newcommand{\Hol}{\mathop{\mathrm{Hol}}\nolimits}
\newcommand{\End}{\mathop{\mathrm{End}}\nolimits}
\newcommand{\CH}{\mathop{\mathrm{CH}}\nolimits}
\newcommand{\Exec}{\mathop{\mathrm{Exec}}\nolimits}
\newcommand{\SPAN}{\mathop{\mathrm{span}}\nolimits}
\newcommand{\birat}{\mathop{\mathrm{birat}}\nolimits}
\newcommand{\cl}{\mathop{\mathrm{cl}}\nolimits}
\newcommand{\rat}{\mathop{\mathrm{rat}}\nolimits}
\newcommand{\Bir}{\mathop{\mathrm{Bir}}\nolimits}
\newcommand{\Rat}{\mathop{\mathrm{Rat}}\nolimits}
\newcommand{\aut}{\mathop{\mathrm{aut}}\nolimits}
\newcommand{\Aut}{\mathop{\mathrm{Aut}}\nolimits}
\newcommand{\eff}{\mathop{\mathrm{eff}}\nolimits}
\newcommand{\nef}{\mathop{\mathrm{nef}}\nolimits}
\newcommand{\amp}{\mathop{\mathrm{amp}}\nolimits}
\newcommand{\DIV}{\mathop{\mathrm{Div}}\nolimits}
\newcommand{\Bl}{\mathop{\mathrm{Bl}}\nolimits}
\newcommand{\Cox}{\mathop{\mathrm{Cox}}\nolimits}
\newcommand{\NE}{\mathop{\mathrm{NE}}\nolimits}
\newcommand{\NM}{\mathop{\mathrm{NM}}\nolimits}
\newcommand{\Gal}{\mathop{\mathrm{Gal}}\nolimits}
\newcommand{\coker}{\mathop{\mathrm{coker}}\nolimits}
\newcommand{\ch}{\mathop{\mathrm{ch}}\nolimits}

\title{Nodal Curves on K3 Surfaces}

\author{Xi Chen}
\address{632 Central Academic Building\\
University of Alberta\\
Edmonton, Alberta T6G 2G1, CANADA}
\email{xichen@math.ualberta.ca}

% %\date{November 21, 2016}
\date{August 6, 2017}

\thanks{Research partially supported by Discovery Grant 262265 from the Natural Sciences and Engineering Research Council of Canada.} 

\keywords{K3 Surface, Severi Variety, Moduli Space of Curves}
 
\subjclass{Primary 14J28; Secondary 14E05}

% % 
\begin{abstract}
In this paper, we study the Severi variety $V_{L,g}$ of genus $g$ curves in $|L|$ on a general polarized K3 surface $(X,L)$. 
We show that the closure of every component of $V_{L,g}$ contains a
component of $V_{L,g-1}$. As a consequence, we see that the general members of every component of $V_{L,g}$ are nodal.
\end{abstract}
% % 

\maketitle

\section{Introduction}

It was proved that every complete linear system on
a very general polarized K3 surface
$(X, L)$ contains a nodal rational curve \cite{C1} and
furthermore every rational
curve in $|L|$ is nodal, i.e., has only nodes $xy=0$ as singularities \cite{C2}.
The purpose of this note is to prove an analogous
result on singular curves in $|L|$ of geometric genus $g>0$.

For a line bundle $A$ on a projective surface $X$, we use the notation
$V_{A,g}$ to denote the Severi varieties of integral curves of geometric
genus $g$ in the complete linear series $|A| = \PP H^0(A)$.
For a K3 surface $X$, it is well known that every component of $V_{A,g}$ has the expected dimension $g$. Furthermore, using theory of deformation of maps, one can show that $\nu: \widehat{C}\to X$ is an immersion for $\nu$ the normalization of
a general member $[C]\in V_{A,g}$ if $g > 0$ \cite[Chap. 3, Sec. B]{H-M}.

It was claimed that a general member of $V_{A,g}$ is nodal on every projective K3
surface $X$ and every $A\in \Pic(X)$ as long as $g > 0$
in \cite[Lemma 3.1]{C1}. However, as kindly pointed out to the
author by Edoardo Sernesi \cite[Sec. 3.3]{D-S}, the proof there is wrong. So this note provides a partial fix for this problem, albeit only
for singular curves in the primitive class $|L|$ on a general
polarized K3 surface $(X,L)$. Our main theorem is

\begin{thm}\label{ECK3THM000}
For a general polarized K3 surface $(X, L)$, every (irreducible) component of $\overline{V}_{L,g}$ contains a component of $V_{L,g-1}$ for all $1\le g\le p_a(L)$, 
where $\overline{V}_{L,g}$ is the closure of $V_{L,g}$ in $|L|$
and $p_a(L) = L^2/2 + 1$ is the arithmetic genus of $L$.
\end{thm}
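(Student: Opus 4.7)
Fix a component $W$ of $V_{L,g}$. My plan is to degenerate $(X,L)$ to a reducible K3 surface, track the specialization of $W$ to the central fibre, produce a limit curve of geometric genus $g-1$ in the boundary, and regenerate it to land inside a component of $V_{L,g-1}$ contained in $\overline{W}$.

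Specifically, I would take a Friedman type II degeneration $\pi\colon \mathcal{X} \to \Delta$ whose general fibre is a general polarized K3 surface $(X_t, L_t)$ and whose central fibre is $X_0 = R_1 \cup_E R_2$, the transverse union of two smooth rational surfaces glued along a smooth elliptic curve $E$ anticanonical on each $R_i$. The line bundle $L$ extends to a line bundle $\mathcal{L}$ on $\mathcal{X}$ restricting to big and nef divisors on the $R_i$ and to a divisor of degree $L^2$ on $E$. The component $W$ then extends to a family over $\Delta^{*}$ whose closure in the relative linear system specializes to a family $\overline{W}_0$ of stable maps $f_0\colon \widehat{C}_0 \to X_0$ of arithmetic genus $g$, with image decomposing as $C_1 \cup C_2$, $C_i \subset R_i$, meeting $E$ in matching divisors of degree $L^2$. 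Using the immersion property of the general-fibre normalization quoted above, transmitted to the limit via the stable-maps compactification, one pins down the combinatorial type of $f_0$ enough to read off the Severi-variety invariants of the $C_i$.

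On the rational surface $R_i$ the Severi variety of curves in a fixed linear system is well understood; in particular, the closure of every component contains a component of the analogous Severi variety for one smaller geometric genus, obtained by acquiring an additional node. I would apply this on (say) $R_1$ to produce a nearby limit $f_0'$ of geometric genus $g-1$, still meeting $E$ transversally in the same divisor and lying in $\overline{W}_0$. The final step is regeneration: smooth $\mathcal{X}$ back to a general fibre and lift $f_0'$ to a family of stable maps into $X_t$. Because the added node is disjoint from $E$, the obstruction to log-smoothing lies in an $H^1$ of the normal sheaf of $f_0'$ that vanishes by positivity of $\mathcal{L}|_{R_i}$, and the lift produces curves in $V_{L_t, g-1}$ a component of which sits inside $\overline{W}$.

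The main obstacle is the classification of limits $\overline{W}_0$. One must rule out pathological configurations in which components of $\widehat{C}_0$ collapse onto $E$, or in which the gluing scheme on $E$ sits in special position, as in those cases producing a genus-$(g-1)$ neighbour in $\overline{W}_0$ and regenerating it without exiting $W$ can fail. The immersion property for the general fibre, together with the explicit geometry and rationality of the $R_i$, should provide the control needed to eliminate these configurations and to ensure the regenerated family of genus $g-1$ curves fills out a full component of $V_{L,g-1}$ inside $\overline{W}$.
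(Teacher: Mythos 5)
Your proposal takes a genuinely different route from the paper, but as written it has real gaps at exactly the points you flag as ``obstacles,'' and those points are the entire difficulty of the problem. First, the classification of the limits of $W$ on a type II degeneration $R_1\cup_E R_2$ is not a technical loose end to be tightened later: limit stable maps can have components mapping into the double curve $E$, multiple covers of components, and gluing divisors on $E$ in special position, and controlling these is a substantial piece of work (it is the hard part of the analogous arguments in \cite{C1} and in subsequent work on type II limits of Severi varieties). The immersion property of the normalization on the general fibre does not by itself propagate to the central fibre in a way that rules these out. Second, the input you invoke on the rational surfaces $R_i$ --- that the closure of every component of the Severi variety in a fixed linear system contains a component of the Severi variety of genus one lower --- is not a known general fact for arbitrary rational surfaces and arbitrary big and nef classes; it is a theorem for $\PP^2$ and for various special surfaces, but the $R_i$ arising here carry an anticanonical elliptic curve and the relevant systems need not fall under the known cases, so you would be assuming on $R_i$ essentially the statement you are trying to prove on $X$. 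Third, even granting a genus $g-1$ limit curve, you need the regenerated family to be $(g-1)$-dimensional in order to conclude that $\overline{W}$ contains an entire component of $V_{L,g-1}$ (since $\dim V_{L,g-1}=g-1$ exactly, a positive-dimensional but smaller family would not suffice); your sketch produces curves but not the dimension count.

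For contrast, the paper sidesteps all three issues by choosing a much more rigid degeneration: a smooth Bryan--Leung K3 $X_0$ with $\Pic(X_0)$ of rank $2$, polarized by $C+mF$, on which \emph{every} member of $|C+mF|$ is forced to be the section $C$ plus $m$ fibres, so no classification of limits is needed. It first reduces the theorem (Proposition 2.1) to showing that $W\cap\Lambda_\sigma$ is not complete for $g-1$ general points $\sigma$, then studies the moduli map $\rho:S\to\overline{\m}_g\times T$ of the stable reduction of the universal family. Over $0$ the stable models are a rational spine with $g$ elliptic tails, one of which varies with a moving fibre of the elliptic fibration; when that fibre becomes one of the $24$ nodal fibres the moduli point lands in $\Delta_0$, and a dimension argument using that $\Delta_0$ is $\BQ$-Cartier, together with the rigidity of the nodal fibres, forces $\rho^{-1}(\Delta_0\times T)$ to meet $S_t$ for general $t$. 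This replaces your degeneration--classification--regeneration scheme with a purely intersection-theoretic argument in $\overline{\m}_g$, which is why it closes. If you want to pursue your route, the three gaps above are where the work lies.
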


Clearly, the above theorem, combining with the fact that every rational curve in $|L|$ is nodal \cite{C2}, implies the following corollary by induction:

\begin{cor}\label{ECK3CORMODULI}
For a general polarized K3 surface $(X, L)$,
the general members of every component of $V_{L,g}$ are nodal for all $0\le g \le p_a(L)$.
\end{cor}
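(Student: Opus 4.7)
I would prove the corollary by induction on $g$, combining \thmref{ECK3THM000} with the result of \cite{C2} that every rational curve in $|L|$ is nodal, which supplies the base case $g=0$. For the inductive step, fix $1\le g\le p_a(L)$, assume that the general member of every component of $V_{L,g-1}$ is nodal, and let $V$ be any component of $V_{L,g}$. By \thmref{ECK3THM000}, $\overline{V}$ contains a component $V'$ of $V_{L,g-1}$, and by the inductive hypothesis a general member $C'$ of $V'$ is nodal with exactly $N+1$ nodes $q_1,\ldots,q_{N+1}$, where $N=p_a(L)-g$.

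The strategy is to exhibit at least one nodal curve inside $V$ by a one-parameter degeneration from such a $C'$. Since $V'$ is an irreducible closed subvariety of codimension one in the irreducible $\overline{V}$, I would pass to a desingularization $\pi:\widetilde{V}\to\overline{V}$, pick a general point of a component of $\pi^{-1}(V')$ dominating $V'$, and push forward a smooth disk through this point transverse to $\pi^{-1}(V')$. This yields a holomorphic arc $\{C_t\}_{t\in\Delta}\subset\overline{V}$ with $C_t\in V$ for $t\ne 0$ and with $C_0=C'$ a general, hence nodal, member of $V'$. Since $C'$ is smooth away from the $q_i$, after shrinking $\Delta$ the only singularities of $C_t$ lie in arbitrarily small disjoint neighborhoods of these $N+1$ points.

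The heart of the argument is the local analysis at each $q_i$. Near $q_i$, the family $\{C_t\}$ is a one-parameter deformation of an ordinary node, and the versal deformation of a node is itself one-parameter with only the central fiber singular; hence near $q_i$ the curve $C_t$ for $t\ne 0$ is either smooth or carries a single node. Combined with the identity $\sum_p\delta_p(C_t)=p_a(L)-g=N$ and the fact that each node contributes $\delta=1$, this forces $C_t$ to have exactly $N$ nodes as its only singularities, i.e.\ $C_t$ is itself nodal. Finally, nodality is an open condition on $V$: any small deformation of a nodal curve in $V_{L,g}$ that either smooths a node (raising the geometric genus) or develops a non-node singularity at a node (raising total $\delta$) exits $V_{L,g}$. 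Hence the nodal locus of $V$ is a nonempty open subset, so dense in the irreducible $V$, and a general member of $V$ is nodal. The main subtlety I anticipate is making the arc land at a genuinely nodal $C_0\in V'$ rather than some accidentally non-nodal member; this is handled cleanly by the transverse-disk construction combined with the openness of the nodal locus in $V'$ supplied by the inductive hypothesis.
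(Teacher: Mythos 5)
Your proof is correct and carries out exactly the induction the paper intends: the paper's own ``proof'' is the single sentence that \thmref{ECK3THM000} together with the nodality of all rational curves in $|L|$ from \cite{C2} implies the corollary by induction, and your transverse-arc, versal-deformation-of-a-node, and $\delta$-count argument is precisely the standard way to fill in that inductive step. No gaps.
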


It was proved in  \cite[Theorem 1.3, 5.3 and Remark 5.6]{KLM} that the general members of every component of $V_{L,g}$ are not trigonal for $g \ge 5$. Combining with \cite[Theorem B.4]{D-S}, it shows that the corollary holds for $5\le g\le p_a(L)$. Of course, we have settled it for all genus $g$ here. As an application, it shows that the genus $g$ Gromov-Witten invariant computed in \cite{B-L} is the same as the number of genus $g$ curves in $|L|$ passing through $g$ general points. 

A comprehensive treatment for $V_{mL,g}$ is planned in a future paper.

As another potential application of Theorem \ref{ECK3THM000}, we want to mention the conjecture of the irreducibility of universal Severi variety $\V_{L,g}$ on K3 surfaces:

\begin{conj}\label{ECK3CONJUNIV}
Let $\K_p$ be the moduli space of polarized K3 surfaces $(X,L)$ of genus $p = p_a(L)$ and let
\begin{equation}\label{ECK3E007}
\V_{L,g} = \{(X,L,C): (X,L)\in \K_p, C\in V_{L,g}\}
\end{equation}
be the universal Severi variety of genus $g$ curves in $|L|$ over $\K_p$.
Then $\V_{L,g}$ is irreducible.
\end{conj}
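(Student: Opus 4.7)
The plan is to combine the deformation theory of the normalization with a degeneration of $(X,L)$ inside the moduli space $\K_p$. Let $W$ be a component of $V_{L,g}$, pick a general $[C]\in W$, and let $\nu\colon \widehat{C}\to X$ be the normalization. Since $g\ge 1$, the cited result \cite[Chap.~3, Sec.~B]{H-M} makes $\nu$ an immersion, so the normal sheaf $N_\nu$ is a line bundle on $\widehat C$; using $K_X=\CO_X$ one finds $N_\nu\cong K_{\widehat C}$ and $h^0(N_\nu)=g=\dim W$, so deformations of $\nu$ in $W$ are unobstructed and parametrized by $H^0(K_{\widehat C})$.

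To exhibit a curve of $V_{L,g-1}$ inside $\overline W$, I would look for a one-parameter deformation $\nu_t$ in which either two previously distinct smooth points of $\widehat C$ are newly identified in $X$ (producing a fresh node of the image) or two of the existing node-preimages coalesce on the source (producing a tacnode-type singularity). A clean way to organize this is via the universal self-intersection locus: the preimage of $\Delta_X\subset X\times X$ under the double evaluation $\widetilde C\times_W\widetilde C\to X\times X$, where $\widetilde C\to W$ is the universal curve. Off the diagonal $\Delta_{\widetilde C}$ this preimage is a degree-$2\delta$ cover of $W$ with $\delta=p_a(L)-g$, and its ramification points and boundary limit points detect exactly the two degeneration types above. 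Because $(X,L)$ is general we have $\Pic X=\BZ L$ with $L$ primitive, so every curve in $|L|$ is integral and $\overline W\setminus W\subset\bigcup_{g'<g}V_{L,g'}$; since $\dim V_{L,g'}=g'$, it suffices to find a single point of $\overline W\setminus W$ whose geometric genus is exactly $g-1$, for then its closure in $\overline W$ is a component of $\overline V_{L,g-1}$.

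To force such a boundary point to exist, I would specialize $(X,L)$ to a reducible K3 $(X_0,L_0)$ --- for instance a Type II degeneration where $X_0$ is a union of two elliptic (or rational) surfaces glued along an elliptic curve --- and analyze the limit of $W$ in $|L_0|$. On such $X_0$, $|L_0|$ contains explicit codimension-one families of reducible curves lying in the flat limits of both $\overline V_{L,g}$ and $\overline V_{L,g-1}$; smoothing these back places a component of $V_{L,g-1}$ inside $\overline W$ on the general fiber. The main obstacle will be the matching step: identifying which smoothing branch stays inside the specific component $W$ (rather than jumping to a competing component of $\overline V_{L,g}$), and guaranteeing that the drop in geometric genus across the central fiber is exactly one rather than two or more. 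This is where the nodal rational curves of \cite{C2} should enter, providing the base case $g=0$ for an induction on $g$ and giving enough explicit nodal curves on the central fiber to seed the smoothing argument.
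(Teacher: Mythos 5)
The statement you are trying to prove is stated in the paper as an open \emph{conjecture}, not a theorem: the paper offers no proof of the irreducibility of $\V_{L,g}$, and explicitly identifies what is still missing. Your proposal does not close that gap. What you outline --- degenerating a component $W$ of $V_{L,g}$ until its closure acquires a point of geometric genus $g-1$, seeded by the nodal rational curves of \cite{C2} --- is an argument toward the paper's Theorem \ref{ECK3THM000} (every component of $\overline{V}_{L,g}$ contains a component of $V_{L,g-1}$), which by induction shows only that every component of $\overline{\V}_{L,g}$ contains a component of $\V_{L,0}$. That is the \emph{first} of the two facts the paper lists as needed for the Harris-style strategy. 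The second fact, which your proposal never addresses, is that $\V_{L,0}$ is irreducible \emph{and} that the monodromy on the $p$ nodes of a rational curve $C\in V_{L,0}$, as $(X,L,C)$ varies in $\V_{L,0}$, is the full symmetric group $\Sigma_p$ (Conjecture \ref{ECK3CONJUNIVRAT}). Without this, knowing that two components of $\V_{L,g}$ each degenerate onto some component of $\V_{L,0}$ does not let you conclude they coincide: they could smooth different subsets of nodes, or land on different components of $\V_{L,0}$. This monodromy statement is precisely the hard, open part of the problem, and nothing in your degeneration scheme supplies it.

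Two further points on the part you did sketch. First, producing a single boundary point of geometric genus exactly $g-1$ does not by itself yield a component of $V_{L,g-1}$ inside $\overline W$; you need the boundary locus $\overline W\setminus W$ to have dimension $g-1$ (then $\dim V_{L,i}\le i$ forces a full component of $V_{L,g-1}$ to appear). The paper arranges this by imposing passage through $g-1$ general points and showing the resulting complete family must meet the boundary divisor $\Delta_0$ of $\overline{\m}_g$; your ``universal self-intersection locus'' device would need an analogous properness/dimension argument to be made to work. Second, the paper's degeneration is to a Bryan--Leung K3 (Picard rank $2$, elliptically fibered, with every member of $|C+mF|$ breaking into $C$ plus fibers), staying inside $\K_p$, rather than to a reducible Type II limit; the Type II route raises additional matching and genus-drop issues that you acknowledge but do not resolve. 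In short: even granting your sketch, you would at best have re-proved Theorem \ref{ECK3THM000}, not Conjecture \ref{ECK3CONJUNIV}.
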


If we approach the conjecture along the line of argument of J. Harris for the irreducibility of Severi variety of plane curves \cite{H}, we need to establish two facts:
\begin{itemize}
\item Every component of $\overline{\V}_{L,g}$ contains a component of $\V_{L,0}$.
\item $\V_{L,0}$ is irreducible and the monodromy action on the $p$ nodes of a rational curve $C\in V_{L,0}$ is the full symmetric group $\Sigma_p$ as $(X,L,C)$ moves in $\V_{L,0}$.
\end{itemize}

The second fact comes easily for plane curves, while the establishment of the first fact is the focus of Harris' proof (see also \cite[Chap. 6, Sec. E]{H-M}). The situation for $\V_{L,g}$ is somewhat reversed at the moment: the first fact follows from our main theorem, while the difficulty lies in the second fact:

\begin{conj}\label{ECK3CONJUNIVRAT}
Let $\V_{L,0}$ be the universal Severi variety of rational curves in $|L|$ over the moduli space $\K_p$ of polarized K3 surfaces $(X,L)$ of genus $p$ and let
\begin{equation}\label{ECK3E008}
\begin{aligned}
\W_{L,0} = \big\{(X,L,C,s_1,s_2,...,s_p): 
&\ (X,L,C)\in \V_{L,0},\\
&\ C_\text{sing} = \{s_1,s_2,...,s_p\}
\big\}.
\end{aligned}
\end{equation}
Then $\W_{L,0}$ is irreducible.
\end{conj}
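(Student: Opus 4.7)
The plan is to split $\W_{L,0}$ into two layers. The forgetful map $\W_{L,0}\to \V_{L,0}$ is an étale $\Sigma_p$-cover on the open locus where the $p$ nodes are distinct, and the projection $\V_{L,0}\to \K_p$ is finite over a dense open of $\K_p$. Hence $\W_{L,0}$ is irreducible if and only if
(i) $\V_{L,0}$ is irreducible, and
(ii) for a general $(X,L,C)\in \V_{L,0}$, the monodromy group $M\subset\Sigma_p$ acting on the $p$ nodes of $C$ as $(X,L,C)$ varies in $\V_{L,0}$ is the full symmetric group $\Sigma_p$. Granted (i), statement (ii) becomes a Picard--Lefschetz question about the codimension-one strata of $\overline{\V}_{L,0}$, so I would treat the two separately.

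For (ii), I would follow the classical recipe: realize enough transpositions in $M$. A transposition in $M$ is produced by a one-parameter degeneration inside $\overline{\V}_{L,0}$ that collides two of the nodes of $C$ into a single tacnode while keeping the remaining $p-2$ nodes intact; the versal deformation of a tacnode has two nodal smoothings that differ by swapping the two local nodes, so the local monodromy around such a boundary stratum is the transposition of the two colliding nodes. The input needed is that, for every pair of nodes (equivalently, after using (i), for at least one pair per $M$-orbit), the corresponding tacnodal locus is pure of codimension one in $\overline{\V}_{L,0}$, nonempty, and meets the component in question. This parallels, in the universal setting over $\K_p$, the codimension-one degeneration analysis behind \thmref{ECK3THM000}, and should be accessible by the same deformation-theoretic techniques once one knows that the normalization of a generic rational curve is an immersion (true for $g>0$ and extendible to $g=0$ by a limit argument, as in \cite{C2}) and that any two chosen branches of the curve can be deformed into tangency on a nearby K3.

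The main obstacle is (i), the global irreducibility of $\V_{L,0}$. The natural attack is to specialize $(X,L)$ to a boundary point of $\K_p$, such as a Type II Kulikov degeneration to a chain $R_1\cup_E R_2$ of smooth rational surfaces meeting along an elliptic curve $E$. Every rational curve in $|L|$ then limits to an explicit nodal configuration whose combinatorial type can be described by Bryan--Leung-style lattice data, and one hopes to show that the vanishing-cycle monodromy around this degeneration acts transitively on these limits, forcing the components of $V_{L,0}$ to merge. Alternatively, on an elliptic K3 with section the components of $V_{L,0}$ can be indexed by lattice data, and a Picard--Lefschetz analysis over $\K_p$ should mix these indexings. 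The hardest point is that we currently have no a priori control on the number of components of $V_{L,0}$ beyond the Yau--Zaslow bound, and no Caporaso--Harris-style recursion tying components together across degenerations of $(X,L)$; consequently we have no good combinatorial handle on which components a given degeneration actually mixes. It is precisely this bookkeeping, rather than the local deformation theory, that keeps both \conjref{ECK3CONJUNIV} and \conjref{ECK3CONJUNIVRAT} open.
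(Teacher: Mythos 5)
The statement you have been asked to prove is posed in the paper as an open conjecture: the paper contains no proof of it, only the observation (just before the statement) that it would imply \conjref{ECK3CONJUNIV} when combined with \thmref{ECK3THM000}. Your proposal likewise does not prove it. Your reduction of the irreducibility of $\W_{L,0}$ to (i) the irreducibility of $\V_{L,0}$ and (ii) the monodromy on the $p$ nodes being all of $\Sigma_p$ is correct --- an \'etale cover is irreducible exactly when the base is irreducible and the monodromy acts transitively on a fiber, and here the fiber is a $\Sigma_p$-torsor of orderings, so transitivity forces the full symmetric group --- but this is precisely the formulation the paper itself gives in the two bullet points preceding the conjecture; packaging those two conditions into the single statement that $\W_{L,0}$ is irreducible is the whole point of how the conjecture is phrased. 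The tacnode/transposition mechanism you invoke for (ii) is the classical Harris recipe \cite{H} and is the right local picture, and the \'etale-cover claim is legitimate over the dense open locus of $\K_p$ where \cite{C2} guarantees exactly $p$ distinct nodes.

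The genuine gap is that neither (i) nor (ii) is established. For (ii) you would need to show that, for at least one pair of nodes in each orbit, the corresponding tacnodal locus is nonempty, of pure codimension one, and meets the relevant component of $\V_{L,0}$; none of this is argued, and it cannot even be started before (i) is settled, since without irreducibility one must realize far more transpositions. For (i) you offer only candidate degenerations (Type II Kulikov, Bryan--Leung) with no mechanism for showing that the resulting vanishing-cycle monodromy actually identifies all components of $V_{L,0}$; as you concede in your final sentence, there is no Caporaso--Harris-style bookkeeping available that controls which components a given degeneration mixes. So what you have written is a correct restatement of the problem together with an unexecuted plan, not a proof --- which is consistent with the paper, where the statement is left open.
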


Our above discussion shows that Conjecture \ref{ECK3CONJUNIVRAT}
implies \ref{ECK3CONJUNIV}.

\subsection*{Conventions}

We work exclusively over $\BC$.
A K3 surface in this paper is always projective. A polarized K3 surface is a pair $(X, L)$, where $X$ is a K3 surface and $L$ is an indivisible ample line bundle on $X$.

\subsection*{Acknowledgment}

I am very grateful to Edoardo Sernesi for pointing out the above-mentioned gap in my paper \cite{C1}. I also want to thank Thomas Dedieu for bringing the paper \cite{KLM} to my attention.

\section{Proof of Theorem \ref{ECK3THM000}}

We start with the following observation:

\begin{prop}\label{ECK3PROP000}
Let $W$ be a component of $V_{L,g}$ for a polarized K3 surface $(X,L)$ with $\Pic(X) = \BZ$. The following are equivalent:
\begin{enumerate}
\item\label{ECK3PROP000ITEM1}
The closure $\overline{W}$ of $W$ in $|L|$ contains a component of $V_{L,g-1}$.
\item\label{ECK3PROP000ITEM2}
$\dim (\overline{W}\backslash W) = g-1$.
\item\label{ECK3PROP000ITEM3}
For a set $\sigma$ of $g-1$ general points on $X$, $W\cap \Lambda_\sigma$ is not projective (i.e. complete), where $\Lambda_\sigma\subset |L|$ is the locus of curves $C\in |L|$ passing through $\sigma$. 
\end{enumerate}
\end{prop}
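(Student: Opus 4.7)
The plan is to prove the two equivalences (1)$\Leftrightarrow$(2) and (2)$\Leftrightarrow$(3). The crucial input from the hypothesis $\Pic(X)=\BZ$ is that every $C\in|L|$ is integral: an effective decomposition $C=C_1+C_2$ would give $[C_i]=a_iL$ with $a_i\in\BZ_{\geq 0}$ and $a_1+a_2=1$, forcing one $C_i$ to vanish; and the primitivity of $L$ excludes $C=kC_1$ for $k\geq 2$. I would record this integrality at the outset, since it underpins both implications.

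For (1)$\Leftrightarrow$(2), the forward direction is a short dimension count: a component of $V_{L,g-1}$ has dimension $g-1$ and lies in $\overline{W}\backslash W$ (since $W$ has geometric genus $g$), while $\overline{W}\backslash W$ is a proper closed subset of the irreducible $g$-dimensional $\overline{W}$ and so has dimension at most $g-1$. For the converse I would pick an irreducible component $Z\subseteq\overline{W}\backslash W$ of dimension $g-1$. Members of $Z$ remain integral, and upper semicontinuity of the $\delta$-invariant in flat families of integral curves gives $p_g\leq g-1$ on $Z$, so $Z\subseteq\bigcup_{g'\leq g-1}\overline{V}_{L,g'}$. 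Since $\dim\overline{V}_{L,g'}=g'<g-1$ for $g'<g-1$, the irreducible $Z$ must meet $V_{L,g-1}$ in a Zariski-dense open subset, which is therefore dense open in some component $W'$ of $V_{L,g-1}$; then $W'\subseteq\overline{Z}=Z\subseteq\overline{W}$.

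For (2)$\Leftrightarrow$(3) I would work with the incidence variety $\mathcal{I}_T=\{(C,\sigma)\in T\times X^{g-1}:\sigma\subset C\}$ attached to any family $T\subseteq|L|$ of integral curves, which has dimension $\dim T+(g-1)$. Taking $T=\overline{W}\backslash W$: if $\dim T\leq g-2$, then $\dim\mathcal{I}_T<2(g-1)=\dim X^{g-1}$, so the projection to $X^{g-1}$ is not dominant, forcing $(\overline{W}\backslash W)\cap\Lambda_\sigma=\emptyset$ for generic $\sigma$ and making $W\cap\Lambda_\sigma=\overline{W}\cap\Lambda_\sigma$ complete. Conversely, if $\dim T=g-1$, I would pick a $(g-1)$-dimensional component $Z$; integrality forces $\bigcup_{C\in Z}C$ to be $2$-dimensional (otherwise the union would be a single irreducible curve common to all $C\in Z$, contradicting $\dim Z\geq 1$), and an inductive dimension count imposing one general point condition at a time then shows $\mathcal{I}_Z\to X^{g-1}$ is dominant. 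Hence $Z\cap\Lambda_\sigma\neq\emptyset$ generically, giving $W\cap\Lambda_\sigma\subsetneq\overline{W}\cap\Lambda_\sigma$ and failure of completeness. The boundary case $g=1$ collapses to $\Lambda_\sigma=|L|$, where (3) simply says $W\neq\overline{W}$, equivalent to $\overline{W}\backslash W\neq\emptyset$.

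The step that I expect to be most delicate is (2)$\Rightarrow$(1), and it is precisely where the hypothesis $\Pic(X)=\BZ$ is indispensable: without integrality of members of $\overline{W}\backslash W$, a $(g-1)$-dimensional component there could consist of reducible curves of total geometric genus $g-1$ and need not yield a component of $V_{L,g-1}$. The dominance/covering argument in (2)$\Rightarrow$(3) is secondary in difficulty but similarly leans on integrality to prevent pathological boundary families.
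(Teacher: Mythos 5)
Your proposal is correct and follows essentially the same route as the paper: both directions rest on the integrality of all members of $|L|$ (from $\Pic(X)=\BZ$ and primitivity of $L$), the inclusion $\overline{W}\backslash W\subset\bigcup_{i<g}V_{L,i}$ with $\dim V_{L,i}\le i$, and the dimension count $\dim(\partial W\cap\Lambda_\sigma)=\dim\partial W-(g-1)$ for general $\sigma$. You merely spell out in more detail (semicontinuity of the geometric genus, the incidence-variety dominance argument) what the paper asserts in one line each.
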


\begin{proof}
(\ref{ECK3PROP000ITEM1}) $\Rightarrow$ (\ref{ECK3PROP000ITEM2}) is obvious. Since every curve in $|L|$ is integral, we have
\begin{equation}\label{ECK3E000440}
\overline{W}\backslash W \subset \bigcup_{i<g} V_{L,i}.
\end{equation}
And since $\dim V_{L,i}\le i$, we have (\ref{ECK3PROP000ITEM2}) $\Rightarrow$ (\ref{ECK3PROP000ITEM1}).

Let $\partial W = \overline{W}\backslash W$. Obviously, 
$\dim (\partial W \cap \Lambda_\sigma) = \dim \partial W - (g-1)$. Therefore,
(\ref{ECK3PROP000ITEM2}) $\Rightarrow$ (\ref{ECK3PROP000ITEM3}). On the other hand, if 
$W\cap \Lambda_\sigma$ is not complete, then there exists $C_\sigma\in \partial W$ passing through $\sigma$. Then $\dim \partial W \ge g-1$.
So (\ref{ECK3PROP000ITEM3}) $\Rightarrow$ (\ref{ECK3PROP000ITEM2}).
\end{proof}

So it suffices to show that $W\cap \Lambda_\sigma$ is not complete for every component $W$ of $V_{L,g}$. We prove this using a degeneration argument similar to the one in \cite{C2}. 
A general K3 surface can be specialized to a {\em Bryan-Leung} (BL) K3 surface $X_0$,
which is a K3 surface with Picard lattice
\begin{equation}\label{ECK3E000}
\begin{bmatrix}
-2 & 1\\
1 & 0
\end{bmatrix}.
\end{equation}
It can be polarized by the line bundle $C + mF$, where
$C$ and $F$ are the generators of $\Pic(X_0)$ satisfying $C^2 = -2$,
$CF=1$ and $F^2 = 0$.
A general polarized K3 surface of genus $m$ can be degenerated to $(X_0, C+mF)$.
Such $X_0$ has an elliptic fibration $X_0\to \PP^1$ with fibers in $|F|$.
For a general BL K3 surface $X_0$, there are exactly $24$ nodal fibers in $|F|$.
A key fact here is that every member
of $|C + mF|$ is ``completely'' reducible in the sense that it is a union of $C$ and $m$ fibers in $|F|$ (counted with multiplicities).

Let $X$ be a family of K3 surfaces of genus $m$ over a smooth quasi-projective 
curve $T$ such that $X_0$ is a general BL K3 surface for a point $0\in T$, $X_t$ are K3 surfaces of $\Pic(X_t) = \BZ$ for $t\ne 0$ and $L$ is a line bundle on $X$ with $L_0 = C+mF$. After a base change, there exists $W\subset \V_{L,g}$ flat over $T$ such that $W_t$ is a component of $V_{L_t,g}$ for all $t\ne 0$. Let $\sigma$ be a set of $g-1$ general sections of $X/T$. It suffices to prove that $W_t\cap \Lambda_\sigma$ is not projective for $t$ general.

By stable reduction, there exists a family $f: Y\to X$ of genus $g$ stable maps over a smooth surface $S$ with the commutative diagram
\begin{equation}\label{ECK3E000150}
\begin{tikzcd}
Y \ar{r}{f} \ar{d} & X\ar{d}\\
S \ar{r}{\pi} & T
\end{tikzcd}
\end{equation}
where $S$ is flat and projective over $T$, $f_* Y_s \in \overline{W}_t \cap \Lambda_\sigma$ on $X_t$ for all $s\in S_t$ and $t\in T$ and $S$ dominates $\overline{W} \cap \Lambda_\sigma$ via the map sending $s\to [f_* Y_s]$. In other words, $f: Y\to X$ is the stable reduction of the universal family over $\overline{W}$
such that $f: Y_s\to X$ is the normalization of a general member $G\in W_t$ passing through the $g-1$ points $\sigma(t)$ for $s\in S_t$ general and $t\ne 0$.

Let us consider the moduli map $\rho: S\to \overline{\m}_g\times T$ sending $s\to ([Y_s], \pi(s))$, where
$\overline{\m}_g$ is the moduli space of stable curves of genus $g$ with $\m_g$ its open subset parameterizing smooth curves. To show that $W_t\cap \Lambda_\sigma$ is not complete, it suffices to show that 
\begin{equation}\label{ECK3E000901}
\rho^{-1}(\Delta\times T) \cap S_t \ne \emptyset
\end{equation}
for $t\ne 0$, where $\Delta = \overline{\m}_g \backslash \m_g$ is the boundary divisor of $\overline{\m}_g$.

Let $F_1, F_2, ..., F_{g-1}\subset X_0$ be $g-1$ fibers in $|F|$ passing through the $g-1$ points $\sigma(0)$, respectively. Since $\sigma(0)$ are in general position, $F_1, F_2, ..., F_{g-1}$ are $g-1$ general fibers in $|F|$ and $\sigma(0)\cap C = \emptyset$.

For every $s\in S_0$, $f_* Y_s\in |C+mF|$ passes through $\sigma(0)$. Therefore, we must have
\begin{equation}\label{ECK3E000900}
f_* Y_s = C + m_1 F_1 + m F_2 + ... + m_{g-1} F_{g-1} + M_s
\end{equation}
for some $m_1, m_2, ..., m_{g-1}\in \BZ^+$. Since the curves in $W_t\cap \Lambda_\sigma$ cover $X_t$ for $t\ne 0$, $f$ is surjective. Hence
$f_* Y_s$ covers $X_0$ as $s$ moves in $S_0$. 
Therefore, $M_s$ contains a moving fiber in $|F|$. More precisely, there exists a component $\Gamma$ of $S_0$ such that $\cup_{s\in \Gamma} M_s = X_0$.

For a general point $s\in \Gamma$, $M_s$ contains a general fiber $F_s$ in $|F|$. Therefore, $Y_s$ has components $\widehat{F}_{1,s}, \widehat{F}_{2,s}, ..., \widehat{F}_{g-1,s}, \widehat{F}_s$ dominating $F_1, F_2, ..., F_{g-1}, F_s$, respectively. And since
$p_a(Y_s) = g$, $\widehat{F}_{1,s}, \widehat{F}_{2,s}, ..., \widehat{F}_{g-1,s}, \widehat{F}_s$
are all elliptic curves. Indeed, it is very easy to see that its moduli $[Y_s]$ in $\overline{\m}_g$
\begin{equation}\label{ECK3E000889}
[Y_s] = [\widehat{C}_s \cup \widehat{F}_{1,s}\cup \widehat{F}_{2,s}\cup ...\cup \widehat{F}_{g-1,s}\cup \widehat{F}_s]
\end{equation}
is a smooth rational curve $\widehat{C}_s$ with $g$ elliptic ``tails''
$\widehat{F}_{1,s}, \widehat{F}_{2,s}, ..., \widehat{F}_{g-1,s}, \widehat{F}_s$ attached to it,
where $\widehat{C}_s$ is the component of $Y_s$ dominating $C$. Of course,
when $g \le 2$, $\widehat{C}_s$ is contracted under the moduli map.

Note that $\widehat{F}_{1,s}, \widehat{F}_{2,s}, ..., \widehat{F}_{g-1,s}, \widehat{F}_s$ are isogenous to $F_1, F_2, ..., F_{g-1}, F_s$, respectively. As $s$ moves on $\Gamma$,
$F_s$ moves in $|F|$. So $\widehat{F}_s$ has varying moduli. 
This shows that $\rho$ maps $S$ generically finitely onto its image. That is,
\begin{equation}\label{ECK3E000903}
\dim \rho(S) = 2.
\end{equation}
Furthermore, when $F_s$ becomes one of $24$ nodal fibers in $|F|$, $\widehat{F}_s$ becomes a union of rational curves. Therefore, there exists $b\in \Gamma$ such that $\widehat{F}_b$ is a connected union of rational curves with normal crossings and $p_a(\widehat{F}_b) = 1$. The moduli $[Y_b]$ of $Y_b$ is thus a smooth rational curve with $g-1$ elliptic tails and one nodal rational curve attached to it. Consequently,
\begin{equation}\label{ECK3E001}
\rho(b) \in \Delta_0\times T
\end{equation}
where $\Delta_0$ is the component of $\Delta$ whose general points parameterize curves of genus $g-1$ with one node. Combining \eqref{ECK3E000903}, \eqref{ECK3E001} and the fact that $\Delta_0$ is $\BQ$-Cartier, we conclude that
\begin{equation}\label{ECK3E002}
\rho(S)\cap (\Delta_0\times T) \ne \emptyset \text{ has pure dimension } 1.
\end{equation}
Therefore, for every connected component $G$ of $\rho^{-1}(\Delta_0\times T)$, we have
\begin{equation}\label{ECK3E004}
\dim \rho(G) = 1.
\end{equation}

If $\rho^{-1}(\Delta_0\times T)\cap S_t\ne\emptyset$ for $t\ne 0$, then \eqref{ECK3E000901} follows and we are done. Otherwise, 
\begin{equation}\label{ECK3E003}
\rho^{-1}(\Delta_0\times T) \subset S_0.
\end{equation}
Let $G$ be the connected component of $\rho^{-1}(\Delta_0\times T)$ containing the point $b$. Then $G\subset S_0$ and $\dim \rho(G) = 1$.

Let $B$ be an irreducible component of $G$ passing through $b$. For $Y_b$, we have
\begin{equation}\label{ECK3E0006}
f_* Y_b = C + m_1 F_1 + m F_2 + ... + m_{g-1} F_{g-1} + M_b
\end{equation}
with $M_b$ supported on the union $F_\Sigma$ of $24$ nodal rational curves in $|F|$. Therefore,
for $s\in B$ general, $M_s$ must also be supported on $F_\Sigma$; otherwise, $M_s$ contains a general member $F_s$ of $|F|$, the moduli $[Y_s]$ of $Y_s$ is given by \eqref{ECK3E000889} and $[Y_s]\not\in \Delta_0$.
Consequently, $M_s \equiv M_b$ for all $s\in B$ and $\rho$ is constant on $B$.

For a component $Q$ of $G$ with $q\in B\cap Q \ne \emptyset$, the same argument shows that $M_s \equiv M_q$ is supported on $F_\Sigma$ for all $s\in Q$ and $\rho$ is constant on $Q$. And since $G$ is connected, we can use this argument to show that $\rho$ is constant on every component of $G$, i.e., constant on $G$. This contradicts \eqref{ECK3E004}.

\end{document}